\theoremstyle{plain}
\newtheorem{lemma}{Lemma}
\newtheorem{theorem}[lemma]{Theorem}
\theoremstyle{remark}
\def\aa{\alpha}
\def\Dd{\Delta}
\def\Om{\Omega}
\def\pp{\partial}
\begin{document}

\title[Regularity for the Three-dimensional Navier--Stokes Equations]
{Global  regularity criterion  for the  $3D$
Navier--Stokes equations involving one entry of the velocity gradient tensor}

\date{May 24, 2010}

\author[C. Cao]{Chongsheng Cao}
\address[C. Cao]
{Department of Mathematics  \\
Florida International University  \\
Miami, FL 33199, USA}
\email{caoc@fiu.edu}

\author[E.S. Titi]{Edriss S. Titi}
\address[E.S. Titi]
{Department of Mathematics \\
and  Department of Mechanical and  Aerospace Engineering \\
University of California \\
Irvine, CA  92697-3875, USA \\
{\bf also}  \\
Department of Computer Science and Applied Mathematics \\
Weizmann Institute of Science  \\
Rehovot 76100, Israel} \email{etiti@math.uci.edu  and edriss.titi@weizmann.ac.il}

\begin{abstract}
In this paper we  provide a sufficient condition,
in terms of only one of the nine entries of the
gradient tensor, i.e., the Jacobian matrix of the
velocity vector field,
 for the global regularity  of strong solutions to the three--dimensional
Navier--Stokes equations in the whole space, as
well as for the case of periodic boundary
conditions.
\end{abstract}

\maketitle

\vskip 0.125in

AMS Subject Classifications: 35Q35, 65M70

Key words: Three-dimensional Navier--Stokes equations, Regularity
criterion for Navier--Stokes equations, global regularity,

\section{Introduction}   \label{S-1}

The three-dimensional  Navier--Stokes equations (NSE) of viscous
incompressible fluid read:
\begin{eqnarray}
&&\hskip-.8in
\frac{\pp u}{\pp t} - \nu  \Dd u  + ( u \cdot \nabla ) u  + \nabla p = 0,  \label{EQ-1}  \\
&&\hskip-.8in
\nabla \cdot u  =0,   \label{EQ-2}
\\&&\hskip-.8in
u(x_1,x_2,x_3,0) = u_0 (x_1,x_2,x_3),  \label{EQ-3}
\end{eqnarray}
where $u=(u_1, u_2, u_3)$, the velocity field, and $p$, the
pressure, are the unknowns, and   $\nu > 0$, the viscosity, is
given. We set $\nabla_h  = (\pp_{x_1}, \pp_{x_2})$ to be the
horizontal gradient operator and $\Dd_h = \pp_{x_1}^2 +\pp_{x_2}^2$
the horizontal Laplacian, while $\nabla$ and $\Dd$ are the usual
gradient and the Laplacian operators, respectively. In this paper we consider finite energy
solutions of   the
system (\ref{EQ-1})--(\ref{EQ-3}) in the whole space $\mathbb{R}^3$, that decay at infinity.
However, we remark  that one can apply our proof, nearly
line by line, to establish same result for the
three--dimensional  Navier--Stokes equations in a periodic domain.

The question of global regularity for the $3D$
Navier--Stokes equations is a major challenging
problem in applied analysis. Over the years there
has been an intensive work by many authors
attacking this problem (see, e.g.,  \cite{CP95},
\cite{CF88}, \cite{DJ95}, \cite{LADY69},
\cite{LADY03}, \cite{LR02}, \cite{LL69},
\cite{PL96}, \cite{SO01A}, \cite{TT84},
\cite{TT95}, \cite{TT00} and references therein).
It is well-known that the 2D Navier--Stokes
equations have a unique weak and strong solutions
which exist globally in time (cf., for example,
\cite{CF88}, \cite{LADY69}, \cite{SO01A},
\cite{TT84}, \cite{TT95}). In the $3D$ case, the
weak solutions are known to exist globally in time.
But, the uniqueness, regularity, and continuous
dependence on initial data for weak solutions are
still open problems. Furthermore, strong solutions
in the $3D$ case are known to exist for a short
interval of time whose length depends on the
physical data of the initial--boundary value
problem. Moreover, this strong solution is known to
be unique and depend continuously on the initial
data (cf., for example, \cite{CF88},
\cite{LADY69},\cite{SO01A}, \cite{TT84}).

Starting from  the pioneer works of Prodi \cite{PG59} and of Serrin
\cite{SJ62}, many articles were dedicated for providing sufficient
conditions for the global regularity of the $3D$ Navier--Stokes
equations (for details see, for example,  the survey papers
 \cite{LADY03}, \cite{TT00} and references therein).
 Most recently, there has been
some progress along these lines (see, for example, \cite{BL02},
\cite{BG02}, \cite{ESV03}, \cite{HK64}, \cite{GI86}, \cite{GM85}, \cite{KT84},
\cite{SO01}, \cite{SO02}, and references therein) which states,
roughly speaking,  that a strong solution $u$ exists on the time
interval $[0, T]$ for as long as
\begin{equation}
u \in L^{p} ([0, T], L^{q}), \quad\text{with}\quad
\frac{2}{p}+\frac{3}{q}=1, \quad \text{for} \quad q \geq 3.
\label{SP}
\end{equation}
Moreover, that has also been some works dedicated to the study
the global regularity of the $3D$ Navier--Stokes equations by
providing some sufficient conditions on the pressure (cf. e.g.,
\cite{BG02}, \cite{CT08}, \cite{CL01}, \cite{DV00}, \cite{Ku06}, \cite{SS02}, \cite{ZY05}). In
addition, some other sufficient regularity conditions were
established in terms of only one component of the velocity field of
the $3D$ NSE on the whole space $\mathbb{R}^3$ or under periodic
boundary conditions (cf. e.g.,  \cite{CT08},  \cite{HE02}, \cite{KZ05},
\cite{PO03}, \cite{PZ10}, \cite{ZY02}).

\vskip0.1in

We denote by $L^q$ and $H^m $ the usual $L^q-$Lebesgue and
Sobolev spaces, respectively (cf. \cite{AR75}), and by
\begin{equation}
\| \phi\|_q =  \left(  \int_{\mathbb{R}^3} |\phi(x)|^q \; dx_1dx_2dx_3
\right)^{\frac{1}{q}},   \qquad  \mbox{ for every $\phi \in
L^q$}.
 \label{LQ}
\end{equation}
We set
\begin{eqnarray*}
 \mathcal{V} &=&  \left\{ \phi: \mbox{the three-dimensional
 vector valued $C^{\infty}_0$ functions and}~
 \nabla \cdot \phi = 0  \right\},
\end{eqnarray*}
which will form the space of test functions.  Let $H$ and $V$ be the
closure spaces of $\mathcal{V}$ in $L^2$ under $L^2-$topology,
and  in $H^1$ under $H^1-$topology, respectively. Let $u_0 \in
H$, we say $u$ is a Leray--Hopf weak solution to the system
(\ref{EQ-1})--(\ref{EQ-3}) on the interval $[0, T]$ with initial
value $u_0$ if $u$ satisfies the following three conditoins:

\begin{itemize}

\item[(1)]
$u  \in C_w([0, T], H) \cap L^2([0, T], V),  $ and
$ \pp_t u  \in L^1([0, T], V^{\prime}),$
where $V^{\prime}$ is the dual space of $V$;

\item[(2)] the weak formulation of the NSE:
\begin{eqnarray*}
&&\hskip-0.35in
\int_{\mathbb{R}^3}  u(x, t) \cdot \phi(x, t) \, dx
-\int_{\mathbb{R}^3}  u(x, t_0) \cdot \phi(x, t_0)  \, dx  \nonumber  \\
&&\hskip-0.25in
=  \int_{t_0}^t
\int_{\mathbb{R}^3} \left[ u(x, t) \cdot \left( \phi_t(x, t)
+ \nu \Dd \phi(x, t)  \right) \right]
\, dx \; ds  \nonumber \\
&&\hskip-0.25in + \int_{t_0}^t \int_{\mathbb{R}^3}  \left[  ( u(x, t) \cdot
\nabla) \phi(x, t)  \right]  \cdot u(x, t)   \, dx,    \label{WEAK}
\end{eqnarray*}
for every test function $\phi \in C^{\infty} ([0, T], \mathcal{V}),$ and
for almost every $t$, $t_0\in [0,T]$;

\item[(3)] the energy inequality:
\begin{eqnarray}
&&\hskip-.68in
\|u(t)\|_2^2 + \nu \int_{t_0}^t
  \| \nabla u (s) \|_2^2 ~ ds
 \leq   \|u(t_0)\|_2^2,  \label{ENG}
\end{eqnarray}
for every $t$ and almost every $t_0$.
\end{itemize}
Moreover, if $u_0\in V$, a weak solution is called strong solution of
(\ref{EQ-1})--(\ref{EQ-3}) on $[0,T]$ if, in addition, it
satisfies
\begin{eqnarray*}
&&u  \in C([0,T], V) \cap L^2([0,T], H^2), ~\mbox{and}~ \pp_t
u \in L^2([0, T], H).
\end{eqnarray*}
In this case, one also has energy equality in (\ref{ENG}) instead of
inequality, and the equality holds for every $t_0$.

\vskip0.1in

In this paper, we provide sufficient conditions, in terms
of only one of the nine components of the gradient of velocity field, i.e.,
the velocity Jacobian matrix, that guarantee the
global regularity of the $3D$ NSE. Specifically, if $u_0 \in V$, and
if for some $T >0 $ and some $k, j,$ with $1 \leq k, j \leq 3,$ we have
\begin{eqnarray}
&&
\frac{\pp u_j}{\pp x_k} \in L^{\beta}([0, T], L^{\aa}(\mathbb{R}^3)); \quad \mbox{when }  k\not= j, \mbox{and where  }
\aa > 3, 1 \leq \beta < \infty,
\mbox{and } \frac{3}{\aa} + \frac{2}{\beta}  < \frac{\aa+3}{2\aa},  \label{CON_1} \\
&&  \mbox{or} \nonumber  \\
&& \frac{\pp u_j}{\pp x_j}  \in L^{\beta} ([0, T], L^{\aa}(\mathbb{R}^3)); \quad \mbox{where  } \aa > 2, 1 \leq \beta < \infty,
\mbox{and } \frac{3}{\aa} + \frac{2}{\beta}  < \frac{3(\aa+2)}{4\aa},
  \label{CON_2}
\end{eqnarray}
where $u=(u_1, u_2, u_3)$ is a weak solution with the initial datum
$u_0$ on $[0, T]$,  then $u$ is a strong solution of the $3D$ Navier--Stokes
equations which exists on the interval $[0,T]$. Moreover, $u$ is the
only weak and strong solution on the interval $[0,T]$ with the
initial datum $u_0$. In particular, if~(\ref{CON_1}) or (\ref{CON_2}) holds
for all $T>0$, then there is a unique global (in time)
 strong solution for the $3D$ NSE with the initial datum $u_0$.

\vskip0.1in

For convenience, we recall the following version of the three-dimensional Sobolev
and Ladyzhenskaya inequalities in the whole space $\mathbb{R}^3$
(see, e.g., \cite{AR75}, \cite{CF88},
\cite{GA94}, \cite{LADY}). There exists a positive constant $C_r$
such that
\begin{eqnarray}
&&\hskip-.68in \| \psi \|_{r} \leq C_r  \| \psi
\|_{2}^{\frac{6-r}{2r}}  \|\pp_{x_1} \psi
\|_2^{\frac{r-2}{2r}} \; \|\pp_{x_2} \psi
\|_2^{\frac{r-2}{2r}}\; \|\pp_{x_3} \psi
\|_2^{\frac{r-2}{2r}}  \nonumber  \\
&&\hskip-.68in
\leq C_r \| \psi
\|_{2}^{\frac{6-r}{2r}} \; \| \psi
\|_{H^1(\mathbb{R}^3)}^{\frac{3(r-2)}{2r}},  \label{SI1}
\end{eqnarray}
for every $\psi\in H^1(\mathbb{R}^3)$ and every $r \in  [2, 6].$
Observe that in case of periodic boundary conditions, one would have instead of (\ref{SI1}) the following inequality
\begin{eqnarray}
&&\hskip-.68in \| \psi \|_{r} \leq C_r  \| \psi
\|_{2}^{\frac{6-r}{2r}} \left( \|\pp_{x_1} \psi
\|_2+ \|\psi\|_2 \right)^{\frac{r-2}{2r}} \left( \|\pp_{x_2} \psi
\|_2+ \|\psi\|_2 \right)^{\frac{r-2}{2r}}\left( \|\pp_{x_3} \psi
\|_2+ \|\psi\|_2 \right)^{\frac{r-2}{2r}}  \nonumber  \\
&&\hskip-.68in
\leq C_r \| \psi
\|_{2}^{\frac{6-r}{2r}} \; \| \psi
\|_{H^1(\Om)}^{\frac{3(r-2)}{2r}},  \label{SI1-1}
\end{eqnarray}
for every $\psi\in H^1(\Om)$ and every $r \in  [2, 6].$ Here, $\Om$ is the periodic box $[0, L]^3$.
We remark that one can apply inequality (\ref{SI1-1}) instead of  inequality (\ref{SI1}), and
the methods presented in this paper to
establish the same results in the case of periodic boundary conditions.
The details of the proof in the periodic case are omitted.

\vskip0.2in

\section{The Main Result} \label{S-2}

In this section we will prove  our main result, which states that
the strong solution to  system (\ref{EQ-1})--(\ref{EQ-3}) exists
 on the interval $[0,T]$ provided  the assumption (\ref{CON_1}) or (\ref{CON_2}) holds.

\begin{theorem} \label{T-MAIN}
Let $u_0 \in V,$ and let $u=(u_1,u_2, u_3)$ be a Leray--Hopf weak
solution to $3D$ NSE,  system (\ref{EQ-1})--(\ref{EQ-3}), with the initial value
$u_0$. Let $T>0$, and suppose that, for some  $k, j,$ with $1 \leq k, j \leq 3,$   $u$ satisfies the condition
(\ref{CON_1}) or (\ref{CON_2}), namely,
\begin{eqnarray}
&&
\int_0^T \left\| \frac{\pp u_j(s)}{\pp x_k} \right\|_{\aa}^{\beta} \; ds \leq M; \quad \mbox{when }  k\not= j, \mbox{and where  }
  \aa > 3, 1 \leq \beta < \infty,
\mbox{and } \frac{3}{\aa} + \frac{2}{\beta}  \leq \frac{\aa+3}{2\aa},    \label{CON-1}
\end{eqnarray}
or
\begin{eqnarray}
&&
\int_0^T \left\| \frac{\pp u_j(s)}{\pp x_j} \right\|_{\aa}^{\beta} \; ds \leq M; \quad \mbox{where }
  \aa > 2, 1 \leq \beta < \infty,
\mbox{and } \frac{3}{\aa} + \frac{2}{\beta}  \leq \frac{3(\aa+2)}{4\aa},    \label{CON-2}
\end{eqnarray}
for some $M>0.$ Then  $u$ is a strong solution of $3D$ NSE, system {\em
(\ref{EQ-1})--(\ref{EQ-3})}, on the interval $[0, T]$. Moreover, it
is the only weak solution on $[0, T]$ with the initial datum $u_0$.
\end{theorem}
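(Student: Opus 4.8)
The plan is to establish global regularity by proving an a priori bound on $\|\nabla u(t)\|_2^2$ (equivalently, on the $H^1$ norm of the strong solution), uniform on $[0,T]$. This suffices because the local existence theory guarantees a strong solution on a short interval, and a uniform $H^1$ bound rules out blow-up, allowing continuation up to $T$; the weak-strong uniqueness of Leray--Hopf solutions then gives that the weak solution $u$ coincides with this strong solution. So the real content is the energy estimate for $\|\nabla u\|_2$ under the single-entry hypothesis \eqref{CON-1} or \eqref{CON-2}.

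The standard starting point is to take the $L^2$ inner product of \eqref{EQ-1} with $-\Delta u$ (or equivalently test the vorticity/gradient equation), yielding
\begin{equation*}
\frac{1}{2}\frac{d}{dt}\|\nabla u\|_2^2 + \nu\|\Delta u\|_2^2 = \int_{\R^3}(u\cdot\nabla)u\cdot\Delta u\,dx.
\end{equation*}
The crux is to bound the nonlinear term using \emph{only} one derivative entry $\pp u_j/\pp x_k$. After integration by parts and exploiting the divergence-free condition \eqref{EQ-2}, one rewrites the cubic term so that the controlled entry is isolated as a multiplicative factor, with the remaining two factors being pieces of $\nabla u$ and $\nabla^2 u$ (in $L^2$). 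I would then apply H\"older's inequality to peel off the $L^\aa$ norm of the single entry, leaving an $L^{2\aa/(\aa-2)}$ norm of a gradient component, which I would interpolate via the anisotropic Ladyzhenskaya--Sobolev inequality \eqref{SI1}. The point of using the sharp product form of \eqref{SI1} — splitting the homogeneous scaling among the three coordinate directions $\pp_{x_1},\pp_{x_2},\pp_{x_3}$ — is to extract the optimal power of $\|\Delta u\|_2$ (or $\|\nabla u\|_2^{1/2}\|\Delta u\|_2^{1/2}$-type factors), which then gets absorbed into the viscous dissipation term on the left by Young's inequality.

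The resulting differential inequality has the schematic form
\begin{equation*}
\frac{d}{dt}\|\nabla u\|_2^2 + \nu\|\Delta u\|_2^2 \leq C\,\Big\|\tfrac{\pp u_j}{\pp x_k}\Big\|_\aa^{\beta}\,\|\nabla u\|_2^2,
\end{equation*}
where the exponent $\beta$ and the admissibility of the absorption step are governed precisely by the scaling constraints $\frac{3}{\aa}+\frac{2}{\beta}\leq\frac{\aa+3}{2\aa}$ (off-diagonal case) and $\frac{3}{\aa}+\frac{2}{\beta}\leq\frac{3(\aa+2)}{4\aa}$ (diagonal case). The two cases $k\neq j$ and $k=j$ require different algebraic manipulations of the nonlinearity: in the diagonal case $\pp u_j/\pp x_j$ one can use incompressibility to trade the diagonal entry against the other diagonal entries, which changes the available interpolation balance and accounts for the weaker requirement $\aa>2$ versus $\aa>3$. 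Once the inequality above is in hand, Gr\"onwall's lemma with the hypothesis $\int_0^T\|\pp u_j/\pp x_k\|_\aa^\beta\,ds\leq M$ gives the desired uniform bound on $\|\nabla u(t)\|_2^2$.

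The main obstacle I anticipate is the algebraic rearrangement of the nonlinear term so that the single controlled entry genuinely factors out while the remaining factors can be estimated by $\|\Delta u\|_2$ with exactly the right exponent to match the scaling budget — this is delicate because a naive H\"older split wastes derivatives and would force a stronger hypothesis than stated. Carefully choosing which integrations by parts to perform, and exploiting $\nabla\cdot u=0$ to convert unfavorable entries into the controlled one, is where the sharpness of the exponents is won or lost; I would expect to treat the horizontal and vertical contributions asymmetrically, in the spirit of anisotropic estimates, using the three-factor form of \eqref{SI1} to distribute the interpolation optimally across directions.
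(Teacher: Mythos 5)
Your high-level strategy --- prove a uniform $H^1$ bound for the strong solution on its maximal interval of existence, then conclude via local existence, continuation, and weak--strong uniqueness, closing with Gronwall --- is exactly the paper's skeleton. But the technical core of your plan is the step you yourself flag as "the main obstacle," and as proposed it cannot be carried out. After integration by parts the nonlinear term is
\[
\int_{\mathbb{R}^3}(u\cdot\nabla)u\cdot\Dd u\,dx=-\int_{\mathbb{R}^3}\sum_{i,j,k=1}^3\frac{\pp u_j}{\pp x_k}\frac{\pp u_i}{\pp x_j}\frac{\pp u_i}{\pp x_k}\,dx,
\]
and this cubic form contains terms such as $(\pp_{x_1}u_1)^3$ and $(\pp_{x_1}u_2)^2\,\pp_{x_2}u_2$ that involve no trace of $u_3$ whatsoever. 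Incompressibility couples only the three diagonal entries, $\pp_{x_1}u_1+\pp_{x_2}u_2+\pp_{x_3}u_3=0$; it can never convert an off-diagonal entry like $\pp_{x_1}u_2$ into the controlled entry $\pp_{x_1}u_3$. Hence no combination of integrations by parts and $\nabla\cdot u=0$ makes the single controlled entry factor out of $\int(u\cdot\nabla)u\cdot\Dd u$, and the one-step differential inequality you write down cannot be reached this way; this impossibility is precisely why the theorem is nontrivial.

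The paper's proof rests on two ideas that are absent from your proposal. First, it is a \emph{two-tier} estimate: before estimating $\|\nabla u\|_2$, the equation is tested with the horizontal Laplacian $-\Dd_h u$ alone. For this test function the nonlinearity has a special cancellation structure (worked out entry by entry, using incompressibility in the form $\pp_{x_1}u_1+\pp_{x_2}u_2=-\pp_{x_3}u_3$ and the factorization $a^3+b^3=(a+b)(a^2-ab+b^2)$), which after a further integration by parts yields
\[
-\int_{\mathbb{R}^3}(u\cdot\nabla)u\cdot\Dd_h u\,dx\le C\int_{\mathbb{R}^3}|u_3|\,|\nabla u|\,|\nabla_h\nabla u|\,dx;
\]
what factors out is the \emph{undifferentiated} component $u_3$, not a gradient entry. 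Second, the single entry enters only through the paper's two anisotropic trilinear lemmas (Lemmas \ref{LEMMA-1} and \ref{LEMMA-2}, proved by a one-dimensional sup estimate plus H\"older and genuinely different from inequality (\ref{SI1}), which you planned to use): for $2<r<3$,
\[
\Bigl|\int_{\mathbb{R}^3}\phi\,f\,g\,dx\Bigr|\le C\,\|\phi\|_2^{\frac{r-1}{r}}\,\|\pp_{x_1}\phi\|_{\frac{2}{3-r}}^{1/r}\,\|f\|_2^{\frac{r-2}{r}}\,\|\pp_{x_2}f\|_2^{1/r}\,\|\pp_{x_3}f\|_2^{1/r}\,\|g\|_2.
\]
Applied with $\phi=|u_3|$, $f=|\nabla u|$, $g=|\nabla_h\nabla u|$, and $r=\frac{3\aa-2}{\aa}$, this transfers exactly one directional derivative onto $u_3$, producing $\|\pp_{x_1}u_3\|_{\aa}$ (off-diagonal case) or, via Lemma \ref{LEMMA-2}, $\|\pp_{x_3}u_3\|_{\aa}$ (diagonal case); the two different exponent constraints in (\ref{CON-1}) and (\ref{CON-2}) arise from this choice of direction in the lemma, not from trading entries by incompressibility. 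The full $\|\nabla u\|_2$ estimate then splits $\Dd u=\Dd_h u+\pp_{x_3}^2u$, bounds $\int(u\cdot\nabla)u\cdot\pp_{x_3}^2u\le C\int|\nabla_h u|\,|\pp_{x_3}u|^2$, controls that by $\|\nabla_h u\|_2\|\nabla u\|_2^{1/2}\|\nabla_h\nabla u\|_2\|\Dd u\|_2^{1/2}$ using (\ref{SI1}) with $r=4$, and closes by feeding in the integrated first-tier bound, H\"older in time, Young's inequality, and Gronwall. Without the horizontal-Laplacian tier and the trilinear lemmas, your outline cannot be completed as stated.
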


\vskip0.05in

Before we prove the main Theorem \ref{T-MAIN}, we show the following two lemmas.

\begin{lemma}  \label{LEMMA-1}
\begin{eqnarray}
&& \left| \int_{\mathbb{R}^3} \phi \; f \; g \;  dx_1dx_2dx_3  \right| \leq
C \| \phi\|_{2}^{\frac{r-1}{r}}  \| \pp_{x_1} \phi\|_{\frac{2}{3-r}}^{1/r}
\|  f \|_{2}^{\frac{r-2}{r}} \; \|\pp_{x_2} f \|_2^{1/r} \; \|\pp_{x_3} f \|_2^{1/r} \; \| g \|_{2},
   \label{LEM-1}
\end{eqnarray}
where $2< r < 3.$
\end{lemma}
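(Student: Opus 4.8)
The plan is to discard $g$ immediately by Cauchy--Schwarz and then prove an \emph{anisotropic} bound for $\int_{\mathbb{R}^3}|\phi|^2|f|^2\,dx$, in which the single $x_1$-derivative controls $\phi$ while the $x_2$- and $x_3$-derivatives control $f$. By Cauchy--Schwarz in $x=(x_1,x_2,x_3)$,
\[
\left|\int_{\mathbb{R}^3}\phi\,f\,g\;dx_1dx_2dx_3\right|\le \|g\|_2\,J^{1/2},\qquad J:=\int_{\mathbb{R}^3}|\phi|^2|f|^2\;dx,
\]
so it suffices to show $J\le C\,\|\phi\|_2^{2(r-1)/r}\,\|\pp_{x_1}\phi\|_{2/(3-r)}^{2/r}\,\|f\|_2^{2(r-2)/r}\,\|\pp_{x_2}f\|_2^{2/r}\,\|\pp_{x_3}f\|_2^{2/r}$; taking the square root then gives (\ref{LEM-1}). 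The exponents on each side balance under the scaling $x\mapsto\lambda x$, which is the quickest way to guess and to double-check them.

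The first key step is a one-dimensional estimate in $x_1$. Fixing $(x_2,x_3)$ and writing $w(x_1)=\phi(x_1,x_2,x_3)$, the fundamental theorem of calculus gives $|w|^r\le r\int_{\mathbb{R}}|w|^{r-1}|w'|\,dx_1$, and Hölder's inequality with the conjugate exponents $\frac{2}{r-1}$ and $\frac{2}{3-r}$ (conjugate precisely because $\frac{r-1}{2}+\frac{3-r}{2}=1$) yields
\[
\sup_{x_1}|\phi|^2\le C\,\Big(\int_{\mathbb{R}}|\phi|^2\,dx_1\Big)^{(r-1)/r}\Big(\int_{\mathbb{R}}|\pp_{x_1}\phi|^{2/(3-r)}\,dx_1\Big)^{(3-r)/r}.
\]
Here the hypothesis $2<r<3$ is exactly what makes $\frac{2}{3-r}>2$ and keeps both exponents admissible. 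Using $\int_{\mathbb{R}}|\phi|^2|f|^2\,dx_1\le (\sup_{x_1}|\phi|^2)\int_{\mathbb{R}}|f|^2\,dx_1$ and integrating over $(x_2,x_3)$ reduces $J$ to a planar integral of a product of three functions of $(x_2,x_3)$.

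I then apply Hölder's inequality on $\mathbb{R}^2$ with the three exponents $a=\frac{r}{r-1}$, $b=\frac{r}{3-r}$, $c=\frac{r}{r-2}$, which for $2<r<3$ all lie in $[1,\infty)$ and satisfy $\frac1a+\frac1b+\frac1c=1$. The factor carrying $\int_{\mathbb{R}}|\phi|^2dx_1$ collapses to $\|\phi\|_2^{2(r-1)/r}$ (because $\frac{2(r-1)}{r}\,a=2$), the factor carrying $\int_{\mathbb{R}}|\pp_{x_1}\phi|^{2/(3-r)}dx_1$ collapses to $\|\pp_{x_1}\phi\|_{2/(3-r)}^{2/r}$, and there remains the planar factor $\big\|\int_{\mathbb{R}}|f|^2\,dx_1\big\|_{L^{c}(\mathbb{R}^2)}$.

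Finally I estimate this last factor by a two-dimensional Gagliardo--Nirenberg (anisotropic Ladyzhenskaya) inequality applied to $H:=\big(\int_{\mathbb{R}}|f|^2dx_1\big)^{1/2}$, namely $\|H\|_{L^{2c}(\mathbb{R}^2)}^2\le C\,\|H\|_{L^2(\mathbb{R}^2)}^{2(r-2)/r}\|\pp_{x_2}H\|_{L^2(\mathbb{R}^2)}^{2/r}\|\pp_{x_3}H\|_{L^2(\mathbb{R}^2)}^{2/r}$, valid since $2c=\frac{2r}{r-2}\in(6,\infty)$. One has $\|H\|_{L^2(\mathbb{R}^2)}=\|f\|_2$, and differentiating under the square root followed by Cauchy--Schwarz in $x_1$ gives the pointwise bound $|\pp_{x_i}H|\le\|\pp_{x_i}f\|_{L^2_{x_1}}$, hence $\|\pp_{x_i}H\|_{L^2(\mathbb{R}^2)}\le\|\pp_{x_i}f\|_2$ for $i=2,3$. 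Substituting produces exactly $\|f\|_2^{2(r-2)/r}\|\pp_{x_2}f\|_2^{2/r}\|\pp_{x_3}f\|_2^{2/r}$, and assembling all factors gives the desired bound on $J$. I expect the main obstacle to be purely the exponent bookkeeping: confirming that the three planar Hölder exponents are conjugate and admissible on the whole range $2<r<3$, and that each Hölder factor genuinely collapses to the advertised power of an honest $\mathbb{R}^3$-norm; the analytic inputs (the $x_1$-estimate and the planar Gagliardo--Nirenberg inequality, together with the reduction of $\pp_{x_i}H$ to $\pp_{x_i}f$) are elementary once the exponents are pinned down.
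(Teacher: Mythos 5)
Your proof is correct, and it rests on the same two analytic ingredients as the paper's proof --- the one-dimensional bound $\sup_{x_1}|\phi|^r\le r\int_{\mathbb{R}}|\phi|^{r-1}|\pp_{x_1}\phi|\,dx_1$ combined with H\"older at the conjugate pair $\frac{2}{r-1},\frac{2}{3-r}$, and the two-dimensional anisotropic Gagliardo--Nirenberg (Ladyzhenskaya) inequality at the exponent $\frac{2r}{r-2}\in(6,\infty)$ in the $(x_2,x_3)$-variables --- but it assembles them in a genuinely different order. The paper never forms $J=\int|\phi|^2|f|^2$: it slices in $x_1$ first (Cauchy--Schwarz on $fg$, sup on $\phi$), applies the planar H\"older inequality with exponents $r,\frac{2r}{r-2},2$ so that $g$ is carried along until that step, and then treats the $f$-factor via Minkowski's integral inequality, $\bigl[\int_{\mathbb{R}^2}\bigl(\int_{\mathbb{R}}f^2\,dx_1\bigr)^{\frac{r}{r-2}}dx_2dx_3\bigr]^{\frac{r-2}{2r}}\le\bigl[\int_{\mathbb{R}}\bigl(\int_{\mathbb{R}^2}|f|^{\frac{2r}{r-2}}dx_2dx_3\bigr)^{\frac{r-2}{r}}dx_1\bigr]^{1/2}$, followed by the 2D inequality applied slice-wise in $x_1$ and a final H\"older in $x_1$. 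You instead dispatch $g$ at the outset by global Cauchy--Schwarz and apply the 2D inequality directly to $H=\bigl(\int_{\mathbb{R}}f^2\,dx_1\bigr)^{1/2}$, replacing Minkowski by the pointwise bound $|\pp_{x_i}H|\le\|\pp_{x_i}f\|_{L^2_{x_1}}$, $i=2,3$; your planar H\"older exponents $\frac{r}{r-1},\frac{r}{3-r},\frac{r}{r-2}$ and the collapse of each factor to an honest $\mathbb{R}^3$-norm check out exactly. The two devices are equivalent, but yours has one point to patch: $H$ need not be differentiable where it vanishes, so one should either observe that $H$ is Lipschitz and that $\nabla H=0$ a.e.\ on $\{H=0\}$ (Rademacher plus the fact that a nonnegative function has vanishing gradient at points of differentiability where it attains zero), or regularize via $(H^2+\ee^2)^{1/2}$ --- or simply use Minkowski as the paper does, which only ever differentiates $f$ itself. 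Net comparison: your arrangement front-loads the trivial $g$-estimate and makes the scaling structure of the remaining bilinear bound transparent, while the paper's arrangement avoids any discussion of the regularity of $H$.
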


\begin{proof} Observe, first, that it is enough to prove the inequality for functions
$\phi, f, g \in C^{\infty}_0(\mathbb{R}^3)$ and then passing to the limit using a density argument.

\begin{eqnarray*}
&& \left| \int_{\mathbb{R}^3} \phi \; f \; g \; dx_1dx_2dx_3  \right|
\leq C \int_{\mathbb{R}^2} \left[
\max_{x_1} |\phi| \; \left( \int_{\mathbb{R}}
f^2 dx_1\right)^{1/2}
\left( \int_{\mathbb{R}} g^2 dx_1\right)^{1/2} \right] \; dx_2dx_3  \\
&&  \leq  C \left[ \int_{\mathbb{R}^2} \left( \max_{x_1} |\phi| \right)^{r} \; dx_2dx_3 \right]^{1/r}
\left[ \int_{\mathbb{R}^2} \left( \int_{\mathbb{R}} f^2 dx_1\right)^{\frac{r}{r-2}} \; dx_2dx_3 \right]^{\frac{r-2}{2r}}
\left( \int_{\mathbb{R}^3} g^2 dx_1dx_2dx_3\right)^{1/2}   \\
&&
\leq C  \left[ \int_{\mathbb{R}^3}   |\phi|^{r-1}  |\pp_{x_1} \phi |  \; dx_1dx_2dx_3 \right]^{1/r}
\left[ \int_{\mathbb{R}} \left( \int_{\mathbb{R}^2} f^{\frac{2r}{r-2}} dx_2dx_3 \right)^{\frac{r-2}{r}} \;
dx_1 \right]^{\frac{1}{2}}
\| g \|_2  \\
&&  \leq
C \| \phi\|_{2}^{\frac{r-1}{r}}  \| \pp_{x_1} \phi\|_{\frac{2}{3-r}}^{1/r}
\|  f \|_{2}^{\frac{r-2}{r}} \; \|\pp_{x_2} f \|_2^{1/r} \; \|\pp_{x_3} f \|_2^{1/r} \; \| g \|_{2}.
\end{eqnarray*}

\end{proof}

\begin{lemma}   \label{LEMMA-2}
\begin{eqnarray}
&& \left| \int_{\mathbb{R}^3} \phi \; f \; g \; dx_1dx_2dx_3 \right|  \leq
C \| \phi\|_{2}^{\frac{r-1}{r}}  \| \pp_{x_3} \phi\|_{\frac{2}{3-r}}^{1/r}
\|  f \|_{2}^{\frac{r-2}{r}} \; \|\pp_{x_1} f \|_2^{1/r} \; \|\pp_{x_2} f \|_2^{1/r} \; \| g \|_{2},
   \label{LEM-2}
\end{eqnarray}
where $2< r < 3.$
\end{lemma}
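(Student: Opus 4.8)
The plan is to obtain (\ref{LEM-2}) directly from Lemma \ref{LEMMA-1} by exploiting the symmetry of the estimate under a permutation of the coordinate axes. Let $\Phi : \mathbb{R}^3 \to \mathbb{R}^3$ be the orthogonal, volume--preserving involution $\Phi(x_1, x_2, x_3) = (x_3, x_2, x_1)$, and set $\tilde\phi = \phi \circ \Phi$, $\tilde f = f \circ \Phi$, $\tilde g = g \circ \Phi$. Since $\Phi$ preserves Lebesgue measure, every norm $\| \cdot \|_q$ on $\mathbb{R}^3$ is invariant, $\| \tilde\psi \|_q = \| \psi \|_q$; moreover, by the chain rule $\pp_{x_1} \tilde\psi = (\pp_{x_3}\psi)\circ\Phi$, $\pp_{x_2}\tilde\psi = (\pp_{x_2}\psi)\circ\Phi$ and $\pp_{x_3}\tilde\psi = (\pp_{x_1}\psi)\circ\Phi$, so that $\|\pp_{x_1}\tilde\psi\|_q = \|\pp_{x_3}\psi\|_q$ and $\|\pp_{x_2}\tilde\psi\|_q = \|\pp_{x_2}\psi\|_q$. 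Applying (\ref{LEM-1}) to the triple $(\tilde\phi, \tilde f, \tilde g)$ and translating every norm back through $\Phi$ reproduces (\ref{LEM-2}) term by term: in particular $\|\pp_{x_1}\tilde\phi\|_{2/(3-r)} = \|\pp_{x_3}\phi\|_{2/(3-r)}$ supplies the single tangential derivative that now falls on the $x_3$ variable, while the two derivatives on $f$ become $\pp_{x_1} f$ and $\pp_{x_2} f$, as required. The hypothesis $2 < r < 3$ is inherited unchanged.

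Alternatively, one may simply repeat the proof of Lemma \ref{LEMMA-1} verbatim, with the distinguished direction $x_1$ replaced by $x_3$ throughout. First I would bound $\left| \int_{\mathbb{R}^3} \phi\,f\,g\,dx_1dx_2dx_3 \right|$ by $C\int_{\mathbb{R}^2} \max_{x_3}|\phi| \,(\int_{\mathbb{R}} f^2\, dx_3)^{1/2}(\int_{\mathbb{R}} g^2\, dx_3)^{1/2}\, dx_1 dx_2$, using the Cauchy--Schwarz inequality in the $x_3$ slice on the product $fg$. Then I would apply H\"older's inequality in the $(x_1,x_2)$ plane with the exponents $r$, $\tfrac{2r}{r-2}$ and $2$ --- exactly as in Lemma \ref{LEMMA-1}, except that the plane of integration is now $(x_1,x_2)$ rather than $(x_2,x_3)$.

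The $\phi$ factor is controlled by the one--dimensional estimate $\bigl(\max_{x_3}|\phi|\bigr)^r \le r \int_{\mathbb{R}} |\phi|^{r-1}\,|\pp_{x_3}\phi|\, dx_3$, which follows from the fundamental theorem of calculus together with the decay of $\phi$ at infinity; a further application of H\"older with the conjugate exponents $\tfrac{2}{r-1}$ and $\tfrac{2}{3-r}$ yields the factor $\|\phi\|_2^{(r-1)/r}\|\pp_{x_3}\phi\|_{2/(3-r)}^{1/r}$. The $f$ factor is handled, as before, by Minkowski's integral inequality followed by the two--dimensional Ladyzhenskaya/Sobolev inequality, now in the variables $(x_1,x_2)$, producing $\|f\|_2^{(r-2)/r}\|\pp_{x_1}f\|_2^{1/r}\|\pp_{x_2}f\|_2^{1/r}$. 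As in Lemma \ref{LEMMA-1}, it suffices to argue for $\phi, f, g \in C_0^\infty(\mathbb{R}^3)$ and to pass to the limit by density.

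There is no genuine obstacle here: the whole content is the observation that (\ref{LEM-2}) is the image of (\ref{LEM-1}) under interchanging the roles of $x_1$ and $x_3$, and the only point that warrants a line of checking is that this relabeling leaves $\pp_{x_2}$ --- the one derivative common to both estimates --- untouched, which it plainly does.
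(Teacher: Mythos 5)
Your proposal is correct, and your second route is exactly the paper's own proof: the authors repeat the slicing argument of Lemma \ref{LEMMA-1} verbatim with $x_3$ as the distinguished direction (Cauchy--Schwarz in the $x_3$ variable, H\"older in the $(x_1,x_2)$-plane with exponents $r$, $\frac{2r}{r-2}$, $2$, the fundamental-theorem-of-calculus bound on $\max_{x_3}|\phi|$, and Minkowski plus the two-dimensional Ladyzhenskaya inequality for $f$). Your primary route --- pulling (\ref{LEM-2}) back to (\ref{LEM-1}) through the measure-preserving involution $(x_1,x_2,x_3)\mapsto(x_3,x_2,x_1)$ --- is the same symmetry observation packaged more economically: it uses Lemma \ref{LEMMA-1} as a black box instead of rewriting the computation, and it is rigorous as stated, since the trilinear integral and all $L^q$ norms are invariant under the swap and the partial derivatives permute exactly as you indicate.
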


\begin{proof} Here, again, it is enough to prove the inequality for functions
$\phi, f, g \in C^{\infty}_0(\mathbb{R}^3)$.
\begin{eqnarray*}
&& \left| \int_{\mathbb{R}^3} \phi \; f \; g \; dx_1dx_2dx_3 \right|  \leq C \int_{\mathbb{R}^2}
\left[  \max_{x_3} |\phi| \; \left( \int_{\mathbb{R}} f^2 dx_3\right)^{1/2}
\left( \int_{\mathbb{R}} g^2 dx_3\right)^{1/2}  \right] \; dx_1dx_2  \\
&&  \leq  C \left[ \int_{\mathbb{R}^2} \left( \max_{x_3} |\phi| \right)^{r} \; dx_1dx_2 \right]^{1/r}
\left[ \int_{\mathbb{R}^2} \left( \int_{\mathbb{R}} f^2 dx_3\right)^{\frac{r}{r-2}} \; dx_1dx_2 \right]^{\frac{r-2}{2r}}
\left( \int_{\mathbb{R}^3} g^2 dx_1dx_2dx_3\right)^{1/2}   \\
&&
\leq C  \left[ \int_{\mathbb{R}^3}   |\phi|^{r-1}  |\pp_{x_3} \phi |  \; dx_1dx_2dx_3 \right]^{1/r}
\left[ \int_{\mathbb{R}} \left( \int_{\mathbb{R}^2} f^{\frac{2r}{r-2}} dx_1dx_2 \right)^{\frac{r-2}{r}} \; dx_3 \right]^{\frac{1}{2}}
\| g \|_2  \\
&&  \leq
C \| \phi\|_{2}^{\frac{r-1}{r}}  \| \pp_{x_3} \phi\|_{\frac{2}{3-r}}^{1/r}
\|  f \|_{2}^{\frac{r-2}{r}} \; \|\pp_{x_1} f \|_2^{1/r} \; \|\pp_{x_2} f \|_2^{1/r} \; \| g \|_{2}.
\end{eqnarray*}

\end{proof}

\noindent
{\bf Proof of the Theorem \ref{T-MAIN}.}
 Without loss of generality, we will assume that $j=3$ and $k=1$ in (\ref{CON-1}) and (\ref{CON-2}),
 namely,
\begin{eqnarray}
&&
\int_0^T \left\| \frac{\pp u_3(s)}{\pp x_1} \right\|_{\aa}^{\beta} \; ds \leq M; \quad \mbox{where  }
  \aa > 3, 1 \leq \beta < \infty,
\mbox{and } \frac{3}{\aa} + \frac{2}{\beta}  \leq \frac{\aa+3}{2\aa},    \label{CONN-1}
\end{eqnarray}
or,
\begin{eqnarray}
&&
\int_0^T \left\| \frac{\pp u_3(s)}{\pp x_3} \right\|_{\aa}^{\beta} \; ds \leq M; \quad \mbox{where }
  \aa > 2, 1 \leq \beta < \infty,
\mbox{and } \frac{3}{\aa} + \frac{2}{\beta}  \leq \frac{3(\aa+2)}{4\aa}.    \label{CONN-2}
\end{eqnarray}

It is well-known that there exists a global in time
Leray--Hopf weak solution to the $3D$ NSE, the
system (\ref{EQ-1})--(\ref{EQ-3}), in whole space
$\mathbb{R}^3$ if $u_0 \in H$ (see, e.g.,
\cite{CF88}, \cite{LADY03}, \cite{LR34},
\cite{PL96}, \cite{SO01A}, \cite{TT95}). It is also
well-known that there exists a unique strong
solution for a short time interval if $u_0 \in V$.
In addition, this strong solution is the only weak
solution, with the initial datum $u_0$, on the
maximal interval of existence of the strong
solution.

Suppose that $u$ is the strong solution with the initial value $u_0 \in
V$ such that $u \in C([0, \mathcal{T}^*), V) \cap L^2 ([0,
\mathcal{T}^*), H^2),$ where $[0,\mathcal{T}^*)$ is the maximal
interval of existence of the unique strong solution. If
$\mathcal{T}^* \ge T$ then there is nothing to prove. If, on the
other hand, $\mathcal{T}^* < T$ our strategy is to show that the
$H^1$ norm of this strong solution is bounded uniformly in time over the interval
$[0,\mathcal{T}^*)$, provided condition (\ref{CONN-1}) or (\ref{CONN-2}) is valid. As a
result the interval $[0, \mathcal{T}^*)$ can not be a maximal interval of
existence, and consequently $\mathcal{T}^* \ge T$. Which will
conclude our proof.

{F}rom now on we focus on the strong solution, $u$, on its maximal
interval of existence $[0, \mathcal{T}^*),$ where  we assume that
$\mathcal{T}^* < T.$  As we have observed earlier the strong
solution $u$ will also be the only weak solution on the interval
$[0,\mathcal{T}^*)$. Therefore, by the energy inequality (\ref{ENG}),
for Leray--Hopf weak solutions, we have (see, e.g.,
\cite{CF88}, \cite{SO01A}, \cite{TT84} or \cite{TT95} for details)
\begin{eqnarray}
&&\hskip-.68in \|u(t)\|_2^2 +\nu \int_0^t  \| \nabla u (s)\|_2^2 \; ds
 \leq K_1,   \label{K-1}
\end{eqnarray}
for all $t\geq 0$, where
\begin{eqnarray}
&&\hskip-.68in
K_1=  \|u_0\|_2^2.  \label{K1}
\end{eqnarray}

Next, let us show that the $H^1$ norm of the strong solution $u$ is bounded on interval $[0, \mathcal{T}^*)$.


\subsection{$\|\nabla_h u\|_2$ estimates}

First we obtain some estimates of the horizontal gradient.
Taking the  inner product of the  equation (\ref{EQ-1}) with $-\Dd_h u$
 in $L^2$, we obtain
\begin{eqnarray*}
&&\hskip-.22in \frac{1}{2} \frac{d \|\nabla_h u \|_2^2  }{d t} + \nu
 \|\nabla_h \nabla u \|_2^2  \\
&&\hskip-.2in = \int_{\mathbb{R}^3} \left[  (u \cdot \nabla) u
\right] \cdot \Dd_h u  \; dx_1dx_2dx_3.
\end{eqnarray*}
By integration by parts few  times, and using the incompressibility
condition~(\ref{EQ-2}), we get
\begin{eqnarray*}
&&\hskip-.265in    -\int_{\mathbb{R}^3} (u \cdot \nabla) u \cdot \Dd_h u \; dx_1dx_2dx_3 =
\int_{\mathbb{R}^3} \sum_{k,j=1}^3 \sum_{l=1}^2 \frac{\pp u_k}{\pp x_j}\frac{\pp u_j}{\pp x_l}\frac{\pp
u_k}{\pp x_l} \; dx_1dx_2dx_3   \\
&&\hskip-.265in =  \int_{\mathbb{R}^3} \left\{ \left(\frac{\pp u_1}{\pp
x_1}\right)^3+\left(\frac{\pp u_2}{\pp x_2}\right)^3 +  \left(\frac{\pp u_1}{\pp x_1}+\frac{\pp u_2}{\pp
x_2}\right)  \left[ \left(\frac{\pp u_1}{\pp x_2}\right)^2 +\left(\frac{\pp u_2}{\pp x_1}\right)^2
+ \frac{\pp u_1}{\pp x_2}\;\frac{\pp u_2}{\pp x_1} \right]  \right.  \\
&&\hskip-.0015in
 +  \left. \sum_{k, l=1}^2 \frac{\pp u_k}{\pp x_3}\frac{\pp
u_3}{\pp x_l}\frac{\pp u_k}{\pp x_l} + \sum_{j,l=1}^2  \frac{\pp u_3}{\pp x_j}\frac{\pp u_j}{\pp x_l}\frac{\pp
u_3}{\pp x_l} + \sum_{l=1}^2  \frac{\pp u_3}{\pp x_3}\frac{\pp u_3}{\pp x_l}\frac{\pp
u_3}{\pp x_l}  \right\} \; dx_1dx_2dx_3    \\
&&\hskip-.265in =    \int_{\mathbb{R}^3} \left\{ - \; \frac{\pp u_3}{\pp x_3} \left[
\left(\frac{\pp u_1}{\pp
x_1}\right)^2+\left(\frac{\pp u_2}{\pp x_2}\right)^2 -\frac{\pp u_1}{\pp x_1}\frac{\pp u_2}{\pp x_2}
+  \left(\frac{\pp u_1}{\pp x_2}\right)^2 +\left(\frac{\pp u_2}{\pp x_1}\right)^2 +
\frac{\pp u_1}{\pp x_2}\;\frac{\pp u_2}{\pp x_1}
 \right] \right.   \\
&&\hskip-.0015in
 +  \left. \sum_{l=1}^2  \frac{\pp
u_3}{\pp x_l} \left[ \sum_{k=1}^2 \frac{\pp u_k}{\pp x_3}
 \frac{\pp u_k}{\pp x_l} + \sum_{k=1}^2  \frac{\pp u_3}{\pp x_k}\frac{\pp u_k}{\pp x_l}
  +   \frac{\pp u_3}{\pp x_3}\frac{\pp
u_3}{\pp x_l} \right] \right\} \; dx_1dx_2dx_3    \\
&&\hskip-.265in
\leq     C \int_{\mathbb{R}^3}  |u_3| \, |\nabla u|~|\nabla_h \nabla u|
  \; dx_1dx_2dx_3.
\end{eqnarray*}
Next, we will estimate the right-hand side of the above inequality using either Lemma \ref{LEMMA-1} or
Lemma \ref{LEMMA-2}. Each will be used for dealing with either one of the conditions (\ref{CONN-1}) or (\ref{CONN-2}).
On the one hand by applying (\ref{LEM-1}),   with $\phi=|u_3|, f = |\nabla u|, g =|\nabla_h \nabla u|,$
and $ r = \frac{3\aa-2}{\aa}$,  we get
\begin{eqnarray}
&&\hskip-.268in
\int_{\mathbb{R}^3} |u_3| \, |\nabla u | \, |\nabla_h \nabla u|
 \; dx_1dx_2dx_3  \nonumber \\
 &&\hskip-.268in
 \leq
 C \| u_3\|_{2}^{\frac{2(\aa-1)}{3\aa-2}}  \| \pp_{x_1} u_3\|_{\aa}^{\frac{\aa}{3\aa-2}}
\|  \nabla u\|_{2}^{\frac{\aa-2}{3\aa-2}} \; \|\pp_{x_2} \nabla u \|_2^{\frac{\aa}{3\aa-2}} \;
\|\pp_{x_3} \nabla u \|_2^{\frac{\aa}{3\aa-2}} \; \| \nabla \nabla_h u \|_{2}   \nonumber  \\
&&\hskip-.268in
 \leq
 C \| u_3\|_{2}^2 \| \pp_{x_1} u_3\|_{\aa}^{\frac{\aa}{\aa-1}}
\|  \nabla u\|_{2}^{\frac{\aa-2}{\aa-1}} \;
\|\pp_{x_3} \nabla u \|_2^{\frac{\aa}{\aa-1}} + \frac{\nu}{2}\; \| \nabla \nabla_h u \|_{2}^2.
\label{EST-NN-1}
\end{eqnarray}
On the other hand by applying (\ref{LEM-2}),   with $\phi=|u_3|, f = |\nabla u|, g =|\nabla_h \nabla u|,$ we obtain
\begin{eqnarray}
&&\hskip-.268in
\int_{\mathbb{R}^3} |u_3| \, |\nabla u | \, |\nabla_h \nabla u|
 \; dx_1dx_2dx_3   \nonumber  \\
 &&\hskip-.268in
 \leq
 C \| u_3\|_{2}^{\frac{2(\aa-1)}{3\aa-2}}  \| \pp_{x_3} u_3\|_{\aa}^{\frac{\aa}{3\aa-2}}
\|  \nabla u \|_{2}^{\frac{\aa-2}{3\aa-2}} \; \|\pp_{x_1} \nabla u \|_2^{\frac{\aa}{3\aa-2}} \;
\|\pp_{x_2} \nabla u \|_2^{\frac{\aa}{3\aa-2}} \; \| \nabla \nabla_h u \|_{2} \nonumber   \\
&&\hskip-.268in
 \leq
 C \| u_3\|_{2}^{\frac{4(\aa-1)}{\aa-2}}  \| \pp_{x_3} u_3\|_{\aa}^{\frac{2\aa}{\aa-2}}
\|  \nabla u \|_{2}^{2} + \frac{\nu}{2}\; \| \nabla \nabla_h u \|_{2}^2.   \label{EST-NN-2}
\end{eqnarray}
In case we use (\ref{EST-NN-1})  we obtain
\begin{eqnarray}
&&\hskip-.22in  \frac{d \|\nabla_h u \|_2^2 }{d t} + \nu
 \|\nabla_h \nabla u\|_2^2
\leq
C \| u_3\|_{2}^2  \| \pp_{x_1} u_3\|_{\aa}^{\frac{\aa}{\aa-1}}
\|  \nabla u\|_{2}^{\frac{\aa-2}{\aa-1}} \;
\|\pp_{x_3} \nabla u \|_2^{\frac{\aa}{\aa-1}};
\label{EST-NN-1-1}
\end{eqnarray}
Alteratively, if we use (\ref{EST-NN-2}), we obtain
\begin{eqnarray}
&&\hskip-.32in  \frac{d \|\nabla_h u \|_2^2 }{d t} + \nu
 \|\nabla_h \nabla u\|_2^2
\leq  C \| u_3\|_{2}^{\frac{4(\aa-1)}{\aa-2}}  \| \pp_{x_3} u_3\|_{\aa}^{\frac{2\aa}{\aa-2}}
\|  \nabla u \|_{2}^{2}.
\label{EST-NN-1-2}
\end{eqnarray}
Therefore, integrating (\ref{EST-NN-1-1}) and using H\"{o}lder inequality and applying (\ref{K-1}) we get
\begin{eqnarray}
&&\hskip-.308in
\|\nabla_h u (t)\|_2^2 + \nu \int_0^t \|\nabla_h \nabla u (s) \|_2^2  \; ds  \nonumber  \\
&&\hskip-.308in
\leq \|\nabla_h u_0\|_2^2 + C  \left( \int_0^t\|\pp_{x_1} u_3(s)\|_{\aa}^{\frac{2{\aa}}{{\aa}-2}}  \|\nabla u (s)\|_2^2\; ds
\right)^{\frac{{\aa}-2}{2({\aa}-1)}} \,
\left( \int_0^t \|\Dd u (s) \|_2^2 \; ds \right)^{\frac{\aa}{2({\aa}-1)}},  \label{EST}
\end{eqnarray}
for all $t \in [0, \mathcal{T}^*).$
Alteratively, integrating (\ref{EST-NN-1-2}) and using H\"{o}lder inequality and applying (\ref{K-1}) we get a different estimate
\begin{eqnarray}
&&\hskip-.308in
\|\nabla_h u (t)\|_2^2 + \nu \int_0^t \|\nabla_h \nabla u (s) \|_2^2  \; ds
\leq  \|\nabla_h u_0\|_2^2 + C  \int_0^t \left( \|\pp_{x_3} u_3(s)\|_{\aa}^{\frac{2\aa}{\aa-2}}  \|\nabla u (s)\|_2^2 \right) \; ds
\label{est}
\end{eqnarray}
for all $t \in [0, \mathcal{T}^*).$


\subsection{$\|\nabla u\|_2$ estimates}
Taking the  inner product of the  equation (\ref{EQ-1}) with $-\Dd u$ in $L^2$, we obtain
\begin{eqnarray}
&&\hskip-.22in \frac{1}{2} \frac{d \left\|\nabla u \right\|_2^2  }{d t} + \nu
 \left\|\Dd u \right\|_2^2   \nonumber  \\
&&\hskip-.2in = \int_{\mathbb{R}^3} \left[  (u \cdot \nabla) u
\right] \cdot \Dd_h u  \; dx_1dx_2dx_3 +  \int_{\mathbb{R}^3} \left[  (u \cdot \nabla) u
\right] \cdot \pp_{x_3}^2 u  \; dx_1dx_2dx_3    \nonumber   \\
&&\hskip-.2in \leq C \int_{\mathbb{R}^3} \left[  |u_3| \; |\nabla u| \; \left|\nabla_h \nabla u \right|
+ |\nabla_h u|\;
\left|\pp_{x_3} u \right|^2 \right]
  \; dx_1dx_2dx_3.   \label{H1}
\end{eqnarray}
Applying the Cauchy--Schwarz inequality and  (\ref{SI1}), with $r=4$, we obtain
\begin{eqnarray}
 &&\hskip-.2in
 \int_{\mathbb{R}^3}  |\nabla_h u|\;
\left|\pp_{x_3} u \right|^2
  \; dx_1dx_2dx_3
   \leq   \|\nabla_h u \|_2 \; \|\nabla u\|_4^2
   \nonumber \\
  &&\hskip-.2in
  \leq \|\nabla_h u \|_2 \; \|\nabla u\|_2^{1/2}  \|\nabla_h \nabla u\|_2 \;\|\Dd u\|_2^{1/2}.
  \label{EST-NL-1}
\end{eqnarray}
Now, we are ready to complete our proof for the case when condition (\ref{CONN-1}) holds.
 By (\ref{EST-NN-1}) and Young's inequality,  we have
\begin{eqnarray*}
 &&\hskip-.2in
 \int_{\mathbb{R}^3}  |u_3| \; |\nabla u| \; \left|\nabla_h \nabla u \right|
  \; dx_1dx_2dx_3   \\
 &&\hskip-.2in
   \leq   C \| u_3\|_{2}^2 \| \pp_{x_1} u_3\|_{\aa}^{\frac{\aa}{\aa-1}}
\|  \nabla u\|_{2}^{\frac{\aa-2}{\aa-1}} \;
\|\pp_{x_3} \nabla u \|_2^{\frac{\aa}{\aa-1}} + \frac{\nu}{2}\; \| \nabla \nabla_h u \|_{2}^2  \\
 &&\hskip-.2in
   \leq   C \| u_3\|_{2}^{\frac{4(\aa-1)}{\aa-2}}  \| \pp_{x_1} u_3\|_{\aa}^{\frac{2\aa}{\aa-2}}
\|  \nabla u \|_{2}^{2} + \frac{3\nu}{4}\; \| \Dd u \|_{2}^2.
\end{eqnarray*}
As a result of the above and (\ref{H1}), we get
\begin{eqnarray*}
&&\hskip-.4in
\frac{d \left\|\nabla u \right\|_2^2  }{d t} + \frac{\nu}{2}
 \left\|\Dd u \right\|_2^2    \\
 &&\hskip-.35in
\leq C \|\nabla_h u \|_2 \; \|\nabla u\|_2^{1/2}  \|\nabla_h \nabla u\|_2 \;\|\Dd u\|_2^{1/2}
+ C   \| u_3\|_{2}^{\frac{4(\aa-1)}{\aa-2}}  \| \pp_{x_1} u_3\|_{\aa}^{\frac{2\aa}{\aa-2}}
\|  \nabla u \|_{2}^{2};
\end{eqnarray*}
Integrating the above inequality and using H\"{o}lder inequality we obtain
\begin{eqnarray*}
&&\hskip-.268in
\left\|\nabla u(t) \right\|_2^2  + \frac{\nu}{2} \int_0^t \left\|\Dd u\right\|_2^2 \; ds  \\
&&\hskip-.2in
\leq  \left\|\nabla u(0) \right\|_2^2 +  C
 \left(\sup_{0\leq s \leq t}\|\nabla_h u (s)\|_2\right) \;
 \left( \int_0^t \left\|\nabla u\right\|_2^2  \; ds \right)^{\frac{1}{4}}
\left( \int_0^t  \|\nabla_h \nabla u (s) \|_2^2 \; ds \right)^{\frac{1}{2}} \;
\left( \int_0^t \left\|\Dd u (s) \right\|_2^2 \; ds \right)^{\frac{1}{4}} \\
 &&\hskip-.2in
+ C \left(\sup_{0\leq s \leq t}\| u (s)\|_2^{\frac{4(\aa-1)}{\aa-2}}\right)
\left( \int_0^t  \|\pp_{x_3} u_3 (s)\|_{\aa}^{\frac{2\aa}{\aa - 2}}
 \|\nabla u(s)\|_2^{2} \; ds\right).
\end{eqnarray*}
Thanks to (\ref{K-1}) and (\ref{EST}),  we get
\begin{eqnarray*}
&&\hskip-.268in
\left\|\nabla u(t) \right\|_2^2 + \frac{\nu}{2} \int_0^t \left\|\Dd u\right\|_2^2 \; ds  \\
&&\hskip-.2in
\leq  \left\|\nabla u(0) \right\|_2^2 + C K_1^{1/4}\;
\left[  \|\nabla_h u_0\|_2^2 + C  \left( \int_0^t\|\pp_{x_1} u_3(s)\|_{\aa}^{\frac{2{\aa}}{{\aa}-2}}  \|\nabla u (s)\|_2^2\; ds
\right)^{\frac{{\aa}-2}{2({\aa}-1)}} \,
\left( \int_0^t \|\Dd u (s) \|_2^2 \; ds \right)^{\frac{\aa}{2({\aa}-1)}+\frac{1}{4}}  \right]  \\
 &&\hskip-.16in
+
C K_1^{\frac{2(\aa-1)}{\aa-2}}
\left( \int_0^t  \|\pp_{x_1} u_3 (s)\|_{\aa}^{\frac{2\aa}{\aa - 2}}
 \|\nabla u(s)\|_2^{2} \; ds\right).
\end{eqnarray*}
By Young's and H\"{o}lder inequalities, we obtain
\begin{eqnarray}
&&\hskip-.268in
\left\|\nabla u(t) \right\|_2^2 +\frac{\nu}{4}  \int_0^t \left\|\Dd u\right\|_2^2 \; ds  \nonumber   \\
  &&\hskip-.268in
\leq
 C \|\nabla u(0) \|_2^2 + C  \left( \int_0^t\|\pp_{x_1} u_3(s)\|_{\aa}^{\frac{4{\aa}}{{\aa}-3}}  \|\nabla u (s)\|_2^2\; ds
\right)
 \left( \int_0^t  \|\nabla u (s)\|_2^2\; ds
\right)^{\frac{{\aa}-1}{2({\aa}-2)}}   \nonumber
\\
 &&\hskip-.16in
+
C K_1^{\frac{2(\aa-1)}{\aa-2}}
\left( \int_0^t  \|\pp_{x_1} u_3 (s)\|_{\aa}^{\frac{2\aa}{\aa - 2}}
 \|\nabla u(s)\|_2^{2} \; ds\right).   \label{RRR-1}
\end{eqnarray}
Thanks again to (\ref{K-1}), we get
\begin{eqnarray}
&&\hskip-.268in
\left\|\nabla u(t) \right\|_2^2 +\frac{\nu}{4}  \int_0^t \left\|\Dd u\right\|_2^2 \; ds
\leq
 C \|\nabla u(0) \|_2^2 + C   \int_0^t\|\pp_{x_1} u_3(s)\|_{\aa}^{\frac{4{\aa}}{{\aa}-3}}  \|\nabla u (s)\|_2^2\; ds.   \label{RRR-1}
\end{eqnarray}
Therefore, in case (\ref{CON-1}) holds, we apply Gronwall inequality to obtain
\begin{eqnarray*}
&&\hskip-.268in
\left\|\nabla u(t) \right\|_2^2 +\nu \int_0^t \left\|\Dd u\right\|_2^2 \; ds
\leq  C(1+\left\|\nabla u(0) \right\|_2^2 )e^{C\, M},
\end{eqnarray*}
for all $t\in [0, \mathcal{T}^*).$  Therefore, if the condition (\ref{CONN-1}) holds the $H^1$ norm of the
solution $u$ is bounded, and this completes our proof in this case. Next, we complete the proof when
 $u_3$ satisfies (\ref{CONN-2}).
Thanks to (\ref{H1}), (\ref{EST-NN-2}) and (\ref{EST-NL-1}), we get
\begin{eqnarray*}
&&\hskip-.4in
\frac{d \left\|\nabla u \right\|_2^2  }{d t} + \frac{\nu}{2}
 \left\|\Dd u \right\|_2^2    \\
 &&\hskip-.35in
\leq C \|\nabla_h u \|_2 \; \|\nabla u\|_2^{1/2}  \|\nabla_h \nabla u\|_2 \;\|\Dd u\|_2^{1/2}
+ C   \| u_3\|_{2}^{\frac{4(\aa-1)}{\aa-2}}  \| \pp_{x_3} u_3\|_{\aa}^{\frac{2\aa}{\aa-2}}
\|  \nabla u \|_{2}^{2};
\end{eqnarray*}
Integrating the above inequality and using H\"{o}lder inequality we obtain
\begin{eqnarray*}
&&\hskip-.268in
\left\|\nabla u(t) \right\|_2^2  + \frac{\nu}{2} \int_0^t \left\|\Dd u\right\|_2^2 \; ds  \\
&&\hskip-.2in
\leq  \left\|\nabla u(0) \right\|_2^2 +  C
 \left(\sup_{0\leq s \leq t}\|\nabla_h u (s)\|_2\right) \;
 \left( \int_0^t \left\|\nabla u\right\|_2^2  \; ds \right)^{\frac{1}{4}}
\left( \int_0^t  \|\nabla_h \nabla u (s) \|_2^2 \; ds \right)^{\frac{1}{2}} \;
\left( \int_0^t \left\|\Dd u (s) \right\|_2^2 \; ds \right)^{\frac{1}{4}} \\
 &&\hskip-.2in
+ C \left(\sup_{0\leq s \leq t}\| u (s)\|_2^{\frac{4(\aa-1)}{\aa-2}}\right)
\left( \int_0^t  \|\pp_{x_3} u_3 (s)\|_{\aa}^{\frac{2\aa}{\aa - 2}}
 \|\nabla u(s)\|_2^{2} \; ds\right).
\end{eqnarray*}
Thanks to (\ref{K-1}) and (\ref{est}),  we get
\begin{eqnarray*}
&&\hskip-.268in
\left\|\nabla u(t) \right\|_2^2 + \frac{\nu}{2} \int_0^t \left\|\Dd u\right\|_2^2 \; ds  \\
&&\hskip-.2in
\leq  \left\|\nabla u(0) \right\|_2^2 + C K_1^{1/4}\;
\left[  \|\nabla_h u_0\|_2^2 + C  \left( \int_0^t\|\pp_{x_3} u_3(s)\|_{\aa}^{\frac{2{\aa}}{{\aa}-2}}  \|\nabla u (s)\|_2^2\; ds
\right) \,
\left( \int_0^t \|\Dd u (s) \|_2^2 \; ds \right)^{\frac{1}{4}}  \right]  \\
 &&\hskip-.16in
+
C K_1^{\frac{2(\aa-1)}{\aa-2}}
\left( \int_0^t  \|\pp_{x_1} u_3 (s)\|_{\aa}^{\frac{2\aa}{\aa - 2}}
 \|\nabla u(s)\|_2^{2} \; ds\right).
\end{eqnarray*}
By Young's and H\"{o}lder inequalities, we obtain
\begin{eqnarray}
&&\hskip-.268in
\left\|\nabla u(t) \right\|_2^2 +\frac{\nu}{4} \int_0^t \left\|\Dd u\right\|_2^2 \; ds     \nonumber  \\
  &&\hskip-.268in
\leq
 C \|\nabla u(0) \|_2^2 + C  \left( \int_0^t\|\pp_{x_3} u_3(s)\|_{\aa}^{\frac{8{\aa}}{3({\aa}-2)}}  \|\nabla u (s)\|_2^2\; ds
\right)
 \left( \int_0^t  \|\nabla u (s)\|_2^2\; ds
\right)^{\frac{1}{4}}   \nonumber
\\
 &&\hskip-.16in
+
C K_1^{\frac{2(\aa-1)}{\aa-2}}
\left( \int_0^t  \|\pp_{x_3} u_3 (s)\|_{\aa}^{\frac{2\aa}{\aa - 2}}
 \|\nabla u(s)\|_2^{2} \; ds\right).   \label{RRR-2}
\end{eqnarray}
Thanks again to (\ref{K-1}),  we get
\begin{eqnarray}
&&\hskip-.268in
\left\|\nabla u(t) \right\|_2^2 +\frac{\nu}{4} \int_0^t \left\|\Dd u\right\|_2^2 \; ds
\leq
 C \|\nabla u(0) \|_2^2 + C  \left( \int_0^t\|\pp_{x_3} u_3(s)\|_{\aa}^{\frac{8{\aa}}{3({\aa}-2)}}  \|\nabla u (s)\|_2^2\; ds
\right).   \label{RRR-2}
\end{eqnarray}
Therefore, by Gronwall inequality and (\ref{CONN-2}) we obtain
\begin{eqnarray*}
&&\hskip-.268in
\left\|\nabla u(t) \right\|_2^2 +\nu \int_0^t \left\|\Dd u\right\|_2^2 \; ds
\leq  C(1+\left\|\nabla u(0) \right\|_2^2 )e^{C\,M}.
\end{eqnarray*}
for all $t \in [0, \mathcal{T}^*).$ Therefore, the $H^1$ norm of the
strong solution $u$ is bounded on the maximal interval of existence
 $[0, \mathcal{T}^*)$.  This
 completes the proof of Theorem \ref{T-MAIN}.

\noindent
\section*{Acknowledgements}
 This work was
supported in part by the NSF grants no.~DMS-0709228
and no.~DMS-0708832, and by the Alexander von
Humboldt Stiftung/Foundation (E.S.T.). The authors
are also thankful to the kind and warm hospitality
of  the Institute for Mathematics and its
Applications (IMA), University of Minnesota, where
part of this work was completed.


\begin{thebibliography}{99}


\bibitem{AR75}  R.A. Adams, {\em Sobolev Spaces},
Academic Press, New York, 1975.

\bibitem{BL02} L.C. Berselli, {\em On a regularity criterion for the solutions to
the 3D Navier--Stokes equations,}  Differential Integral Equations,
{\bf 15} (2002),  1129--1137.

\bibitem{BG02} L.C. Berselli and  G.P. Galdi,  {\em Regularity criteria involving
the pressure for the weak solutions to the Navier--Stokes equations,}
Proc. Amer. Math. Soc, {\bf 130} (2002), 3585--3595.



\bibitem{CT08} C. Cao and E.S. Titi, {\em Regularity criteria for the three--Dimensional Navier-Stokes equations,}
Indiana Univ. Math. J. {\bf 57} (2008), 2643--2662.

\bibitem{CL01} D. Chae and J. Lee, {\em Regularity criterion in
terms of pressure for the Navier-Stokes equations,} Nonlinear Anal.,
{\bf 46} (2001), 727--735.

\bibitem{CP95} P. Constantin, {A few results and open problems regarding incompressible
fluids,} Notices Amer. Math. Soc., {\bf 42} (1995), 658-–663.

\bibitem{CF88} P. Constantin and C. Foias, {\em Navier-Stokes Equations,}
  The University of Chicago Press, 1988.

\bibitem{DV00} H.B. Da Veiga, {\em
A sufficient condition on the pressure for the regularity of weak
solutions to the Navier--Stokes equations,} J. Math. Fluid Mech.,
{\bf 2} (2000),  99--106.

\bibitem{DJ95} C. Doering and J. Gibbon, {\em Applied Analysis of the Navier--Stokes
Equations,}  Cambridge University Press, 1995.

\bibitem{ESV03} L. Escauriaza, G.A. Seregin and V. Sverak, {\em $L^{3, \infty}$-solutions of
the Navier--Stokes equations and backward uniqueness}, Russ. Math.
Surv., {\bf 58}(2003), 211--250.

\bibitem{HK64} H. Fujita and T. Kato, {\em On the Navier-Stokes initial value problem,}
I. Arch. Rat. Mech. Anal., {\bf 3} (1964), 269--315.

\bibitem{GA94} G.P. Galdi, {\em An Introduction to the Mathematical
Theory of the Navier-Stokes Equations,} Vol. I \& II,
Springer-Verlag, 1994.

\bibitem{GI86} Y. Giga, {\em Solutions for semilinear parabolic equations in $L^p$ and regularity
of weak solutions of the Navier--Stokes system,} J. Differential
Equations, {\bf 62 } (1986), 186--212.


\bibitem{GM85} Y. Giga and T. Miyakawa, {\em Solutions in $L_r$ of the
Navier--Stokes initial value problem,} Arch. Rational Mech. Anal.,
{\bf 89} (1985),  267--281.

\bibitem{HE02} C. He, {\em New sufficient conditions for
regularity of solutions to the Navier--Stokes equations,} Adv. Math.
Sci. Appl., {\bf 12}  (2002),  535--548.


\bibitem{KT84} T. Kato, {\em Strong $L^p$ solutions of the  Navier--Stokes
equation in $R^m$, with applications to weak solutions,} Math. Z.,
{\bf 187} (1984), 471--480.

\bibitem{Ku06} I. Kukavica, {\em Role of the pressure for
validity of the energy equality for solutions of the Navier-Stokes
equation}, Journal of Dynamics and Differential Equations, {\bf 18}
(2006), 461--482.


\bibitem{KZ05} I. Kukavica and M. Ziane, {\em One component regularity for
the Navier--Stokes equation,} Nonlinearity {\bf 19(2)} (2006), 453--469.

\bibitem{LADY69} O.A. Ladyzhenskaya, {\em{ Mathematical Theory of
Viscous Incompressible Flow}}, Gordon and Breach, New York,
English translation, 2nd ed., 1969.


\bibitem{LADY} O.A. Ladyzhenskaya,  {\em The Boundary
Value Problems of Mathematical Physics,} Springer-Verlag, 1985.

\bibitem{LADY03} O.A. Ladyzhenskaya,  {\em  The sixth millennium problem: Navier--Stokes
equations, existence and smoothness,} (Russian)  Uspekhi Mat. Nauk.
{\bf 58(2)}  (2003),  45--78; translation in  Russian Math.
Surveys,  {\bf 58(2) } (2003), 251--286.


\bibitem{LR02} P.G. Lemari\'{e}--Rieusset, {\em Recent Developments
in the Navier-Stokes Problem,}
 Chapman \& Hall; London, 2002.

\bibitem{LR34} J. Leray, {\em Sur le mouvement d'un liquide visquex emplissant l'espace},  Acta Math. {\bf 63} (1034), 193–-248.

\bibitem{LL69} J.L. Lions, {\em Quelques M\'{e}thodes De R\'{e}solution Des
 Probl\`{e}mes Aux Limites
Non Lin\'{e}aires,}  Dunod, Paris, 1969.

\bibitem{PL96} P.L. Lions, {\em  Mathematical Topics in Fluid Mechanics: Volume 1: Incompressible Models,}
Oxford University Press, 1996.




\bibitem{PO03} M. Pokorn\'{y}, {\em On the result of He concerning the smoothness of
solutions to the Navier--Stokes equations,}  Electron. J. Differential Equations
{\bf 11} (2003), 1--8.


\bibitem{PZ10} M. Pokorn\'{y} and Y. Zhou, {\em On the regularity of the solutions of the Navier-–Stokes
equations via one velocity component}, Nonlinearity {\bf 23} (2010), 1097--1107.

\bibitem{PG59} G. Prodi, {\em Un teorema di unicit\`a per le equazioni di
Navier--Stokes}, Ann. Mat. Pura Appl.,  {\bf 48} (1959), 173--182.


\bibitem{RS93} G. Raugel and G.R. Sell, {\em  Navier-Stokes equations on thin $3$D domains.
I. Global attractors and global regularity of solutions,} J. Amer.
Math. Soc.,  {\bf 6} (1993), 503--568.

\bibitem{SS02} G. Seregin and V. Sver\'{a}k, {\em Navier--Stokes equations with
lower bounds on the pressure,}  Arch. Ration. Mech. Anal., {\bf
163(1)} (2002),  65--86.

\bibitem{SJ62} J. Serrin, {\em On the interior regularity of weak solutions of
the Navier--Stokes equations,} Arch. Rational Mech. Anal. {\bf 9}
(1962), 187--191.

\bibitem{SO01A} H. Sohr, {\em  The Navier--Stokes Equations, An Elementary Functional Analytic Approach,}
Birkh\"{a}user Verlag, Basel, 2001.

\bibitem{SO01} H. Sohr,{\em A regularity class for the Navier--Stokes equations in Lorentz spaces,}
 J. Evol. Equ., {\bf 1} (2001), 441--467.

\bibitem{SO02} H. Sohr, {\em A generalization of Serrin's regularity criterion for the
Navier--Stokes equations,}   Quad. Mat., {\bf 10} (2002), 321--347.


\bibitem{TT84} R. Temam, {\em Navier-Stokes Equations, Theory and
Numerical Analysis,} North-Holland, 1984.


\bibitem{TT95} R. Temam, {\em Navier--Stokes Equations and Nonlinear Functional Analysis,}
2nd Edition, SIAM, 1995.

\bibitem{TT00} R. Temam, {\em  Some developments on Navier--Stokes equations in
the second half of the 20th century,}
Development of Mathematics 1950--2000, Birkhauser, Basel, 2000, 1049--1106.

\bibitem{ZY02} Y. Zhou, {\em A new regularity criterion for the Navier--Stokes equations in
terms of the gradient of one velocity component,} Methods Appl.
Anal., {\bf 9} (2002), 563--578.


\bibitem{ZY05} Y. Zhou, {\em On regularity criteria in terms of pressure for
the Navier--Stokes equations in $\mathbb{R}^3$,}   Proc. Amer. Math.
Soc.,  {\bf  134}  (2005), 149--156.



\end{thebibliography}
\end{document}